\documentclass[11pt]{article}
\usepackage{amsfonts}
\usepackage{amssymb}
\usepackage{amstext}
\usepackage{amsmath}
\usepackage{amsthm,verbatim}
\usepackage{xspace}
\usepackage{graphicx}
\usepackage{algorithm}

\usepackage{typearea}
\typearea{14}


\usepackage{color}
\definecolor{Darkblue}{rgb}{0,0,0.4}
\definecolor{Brown}{cmyk}{0,0.81,1.,0.60}
\definecolor{Purple}{cmyk}{0.45,0.86,0,0}
\usepackage[breaklinks]{hyperref}
\hypersetup{colorlinks=true,
            citebordercolor={.6 .6 .6},linkbordercolor={.6 .6 .6},
            citecolor=black,urlcolor=blue,linkcolor=black,pagecolor=black}
\newcommand{\lref}[2][]{\hyperref[#2]{#1~\ref{#2}}}
\usepackage{hyperref}

\bibliographystyle{references}
\let\oldfootnoterule=\footnoterule
\def\footnoterule{\vspace{-1ex}\oldfootnoterule\vspace{1ex}}



\newtheorem{lemma}{Lemma}
\newtheorem{theorem}{Theorem}
\newtheorem{definition}{Definition}

\makeatletter
\def\newxxxproof#1{\@nxxxprf{#1}}

\def\@nxxxprf#1#2{\@xnxxxprf{#1}{#2}}

\def\@xnxxxprf#1#2{\expandafter\@ifdefinable\csname #1\endcsname
\global\@namedef{#1}{\@xxxprf{#1}{#2}}\global\@namedef{end#1}{\@endxxxproof}}

\def\@xxxprf#1#2{\@xxxxprf{#1}{#2}}

\def\@xxxxprf#1#2{\@beginxxxproof{#2}{\csname the#1\endcsname}\ignorespaces}

\def\@beginxxxproof#1{\rm \trivlist \item[\hskip \labelsep{\sc #1.\/}]}
\def\@endxxxproof{\outerparskip 0pt\endtrivlist}

\newxxxproof{@xxxproof}{Main Theorem}

\makeatother

\newcommand{\f}[2][xxx]{\ifthenelse{\equal{#1}{xxx}}{{f}_{#2}}{{f}_{#2}[#1]}}
\renewcommand{\c}[2][xxx]{\ifthenelse{\equal{#1}{xxx}}{{c}_{#2}}{{c}_{#2}[#1]}}
\renewcommand{\o}[2][xxx]{\ifthenelse{\equal{#1}{xxx}}{{o}_{#2}}{{o}_{#2}[#1]}}

\newcommand{\ival}{{\mathcal I}}


\newcounter{note}[section]

\def\beq{\begin{eqnarray}}
\def\eeq{\end{eqnarray}}
\def\ben{\begin{enumerate}}
\def\een{\end{enumerate}}
\def\beqs{\begin{eqnarray*}}
\def\eeqs{\end{eqnarray*}}
\newcommand{\N}{\mathbb{N}}

\newcommand{\R}{\mathbb{R}}

\newcommand{\eps}{\epsilon}

\newcommand{\de}{\delta}

\newcommand{\E}{\mathbb{E}}
\newcommand{\p}{\mathbb{P}}

\newcommand{\lab}{\label}

\newcommand{\ra}{\rightarrow}

\newcommand{\HH}{\mathbb H}

\newcommand{\ii}{\iota}

\newcommand{\hs}{\mathrm{hsiz}}
\newcommand{\hcap}{\mathrm{hcap}}

\newcommand{\Half}{\HH}
\newcommand{\rect}{{\mathcal R}}
\newcommand{\ball}{{\mathcal B}}
\newcommand{\area}{\mathrm{area}}
\renewcommand {\Im} {{\rm Im}}

\bibliographystyle{references}
\begin{document}

\title{\Large  A Geometric Interpretation of Half-Plane Capacity}
\author{Steven Lalley\\
Department of Statistics\\ University of Chicago
\thanks{Research supported by National Science Foundation grant
DMS-0805755.}
\and
Gregory Lawler\\
Department of Mathematics\\ University of Chicago
\thanks{Research supported by National Science Foundation grant
DMS-0734151.}
 \and Hariharan Narayanan\\ Department of Computer Science \\
University of Chicago}
\date{}
\maketitle

\begin{abstract} 
Let $A$ be a bounded, relatively closed subset of the upper
half plane $\Half$ whose complement in $\Half$ is simply connected. If $B_t$
is a standard complex Brownian motion and $\tau_A = \inf\{t
\geq 0: B_t \not \in \Half \setminus A\}$, the half-plane capacity $\hcap(A)$ is defined as 
\[  \hcap(A) := \lim_{y \rightarrow \infty}y\, \E^{iy}
  \left[\Im(B_{\tau_A})\right]. \] This quantity arises in the study of Schramm-Loewner Evolutions (SLE). In this note, we show that $\hcap(A)$ is comparable to a more geometric quantity $\hs(A)$ that we define to be the $2$-dimensional Lebesgue measure of the
union of all balls tangent to $\R$ whose centers belong to $A$. Our main result is that
\[ \frac{1}{66}\, {\hs(A)}  < {\hcap(A)} \leq  \frac{7}{2\pi}\, {\hs(A)} .\] 
\end{abstract}
\section{Introduction}
Suppose $A$ is a bounded, relatively closed subset of the upper
half plane $\Half$.  We call $A$ a compact $\Half$-hull if $A$
is bounded and $\Half \setminus A$ is simply connected.  The
{\em half-plane capacity} of $A$, $\hcap(A)$, is defined in
a number of equivalent ways (see
\cite{lawler}, especially Chapter 3).  If $g_A$ denotes the
unique conformal transformation of $\Half \setminus A$
onto $\Half$ with $g_A(z) = z + o(1)$ as $z \rightarrow
\infty$, then $g_A$ has the expansion
\[  g_A(z) = z + \frac{\hcap(A)}{z} + O(|z|^{-2}), \;\;\;\;
  z \rightarrow \infty.\]
Equivalently, if $B_t$
is a standard complex Brownian motion and $\tau_A = \inf\{t
\geq 0: B_t \not \in \Half \setminus A\}$,
\[  \hcap(A) = \lim_{y \rightarrow \infty}y\, \E^{iy}
  \left[\Im(B_{\tau_A})\right]. \]
Let $\Im[A] = \sup\{\Im(z): z \in A\}$.  Then if $y \geq
\Im[A]$, we can also write
\[       \hcap(A) = \frac 1 \pi \int_{-\infty}^\infty \E^{x+iy}
   \left[\Im(B_{\tau_A})\right]\, dx. \]
These last two definitions do not require $\Half \setminus A$
to be simply connected, and the latter definition does not require
$A$ to be bounded but only that $\Im[A]<\infty$.

 For $\Half$-hulls
(that is, for $A$ for which $\Half \setminus A$ is simply
connected), the half-plane capacity is comparable to a more
geometric quantity that we define.  This is not new (the second author learned it from
 Oded Schramm in oral communication), but
we do not know of a proof in the literature.  In this
note, we prove the fact giving (nonoptimal) bounds on the
constant.
 We start with the definition of the geometric
quantity.

\begin{definition}
 For an $\Half$-hull
$A$, let $\hs(A)$ be the $2$-dimensional Lebesgue measure of the
union of all balls centered at points in $A$ that are tangent to the real line.
In other words
\[   \hs(A) = \area \left[\bigcup_{x+iy \in A} \ball(x+iy,y)
  \right], \]
where $\ball(z,\epsilon)$ denotes the disk of radius $\epsilon$ about $z$.
\end{definition}

In this paper, we prove the following.

\begin{theorem} \label{main} For every $\Half$-hull $A$,
\[ \frac{1}{66}\, {\hs(A)}  < {\hcap(A)} \leq  \frac{7}{2\pi}\, {\hs(A)} .\]
\end{theorem}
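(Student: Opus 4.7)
The plan is to prove the two inequalities separately, both anchored in the probabilistic definition of $\hcap$.

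For the upper bound $\hcap(A)\le \frac{7}{2\pi}\hs(A)$, I would start from the identity $\hcap(A) = \frac{1}{\pi}\int_{-\infty}^{\infty}\E^{x+iy_0}[\Im(B_{\tau_A})]\,dx$, valid for $y_0 \ge \Im[A]$. Since $\Im(B_{\tau_A})\le \Im[A]$ almost surely, the inner expectation is at most $\Im[A]\cdot \p^{x+iy_0}(\tau_A<\tau_{\R})$. Monotonicity of harmonic measure combined with $A\subseteq U := \bigcup_{z\in A}\ball(z,\Im z)$ then gives the further bound $\Im[A]\cdot \p^{x+iy_0}(\tau_U<\tau_{\R})$, and note $\area(U)=\hs(A)$. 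The integral $\int_{-\infty}^{\infty}\p^{x+iy_0}(\tau_U<\tau_{\R})\,dx$ can be controlled by a constant multiple of $\area(U)/\Im[U]$ via the half-plane Green's function (equivalently, by writing it as a Brownian occupation-time integral against the indicator of $U$). Because $\Im[U]\ge \Im[A]$, the $\Im[A]$ factor cancels and one obtains $\hcap(A)\le C\,\hs(A)$; explicit tracking of the $1/\pi$ from the capacity formula, the constant from the Green's function estimate applied to tangent balls, and the possible 2 to 1 ratio between $\Im[U]$ and $\Im[A]$ should yield $C \le 7/(2\pi)$.

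For the lower bound $\hcap(A) > \frac{1}{66}\hs(A)$, the main obstacle is that $\hcap$ is only subadditive, so a direct summation over pieces of $A$ is doomed. My plan is a Vitali-type covering argument: extract $z_1,\dots,z_N\in A$ with $z_i=x_i+iy_i$ such that the tangent balls $\ball(z_i, y_i)$ are pairwise disjoint while their threefold inflations $\ball(z_i, 3y_i)$ cover $U$. Coverage then forces $\sum_i y_i^2 \ge \hs(A)/(9\pi)$. To turn this into a capacity lower bound, use $\hcap(A) = \lim_{y\to\infty} y\,\E^{iy}[\Im(B_{\tau_A})]$: the disjointness of the chosen balls makes the events ``$B$ enters $\ball(z_i, y_i)$ before hitting $\R$'' essentially mutually exclusive for large $y$, and on each such event $\Im(B_{\tau_A})$ is at least of order $y_i$ (since $z_i\in A$ lies inside the ball). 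Standard harmonic-measure estimates give $\p^{iy}(B \text{ enters } \ball(z_i, y_i) \text{ before } \R) \gtrsim y_i\cdot \Im z_i / y$ asymptotically, so $y\,\E^{iy}[\Im(B_{\tau_A})] \gtrsim \sum_i y_i^2$, whence $\hcap(A)\gtrsim \hs(A)/(9\pi)$. Multiplying out the explicit geometric and probabilistic constants should produce $1/66$.

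The principal difficulty is the lower bound, where the subadditivity of $\hcap$ must be reconciled with the essentially additive nature of $\hs(A)$. The Vitali step extracts well-separated tangent balls whose capacity contributions are nearly independent, but at the price of several constant-factor losses (the $9\pi$ area overhead, plus constants from harmonic-measure hitting-probability estimates), whose accumulation is responsible for the relatively loose numerical constant $1/66$.
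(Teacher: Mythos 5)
The main gap is in your lower bound. After extracting disjoint balls $\ball(z_i,y_i)$, you assert that the events $E_i=\{B$ enters $\ball(z_i,y_i)$ before $\R\}$ are ``essentially mutually exclusive'' for large $y$ and that on $E_i$ one has $\Im(B_{\tau_A})\gtrsim y_i$. Neither claim holds. Disjointness of the balls does not prevent a single Brownian path from visiting many of them before hitting $A\cup\R$ (take $A$ to be a comb of well-separated vertical slits: a path travelling horizontally just above the tips enters many of the tangent balls), so $\sum_i\p(E_i)$ can badly overcount a single path; and entering $\ball(z_i,y_i)$ forces neither a hit of $A$ near $z_i$ nor a terminal point with large imaginary part --- the path may leave the ball and later land on $\R$ or on a low-lying part of $A$. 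What is needed is a family of \emph{disjoint} events $E_i$ with $\E[\Im(B_{\tau_A});E_i]\gtrsim y_i\,\p(E_i)$ and $\p(E_i)\gtrsim y_i^2/y$, and producing these is exactly where the work lies. The paper does this by ordering the selected points by decreasing height, covering $A$ by ``shadow'' rectangles $\rect(z_j,2c)$ rather than inflated balls, and proving a peeling inequality (Lemma~\ref{todaylem.2}): adjoining the downward projection $S$ of $A\setminus\rect(z_j,2c)$, using $\hcap(A_1\cup A_2)-\hcap(A_2)\le\hcap(A_1)-\hcap(A_1\cap A_2)$, a conformal strip estimate, and $\hcap\ge\Im[\cdot]^2/2$ shows that each $z_j$ contributes at least $\rho_c^2 y_j^2$ to the capacity. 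Your plain Vitali extraction has no analogue of this step, and without it $\hcap(A)\gtrsim\sum_i y_i^2$ is unproved.

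The upper bound also has a gap, though a less structural one: you pass from $\int\p^{x+iy_0}(\tau_U<\tau_\R)\,dx$ to $C\,\area(U)/\Im[U]$ ``via occupation times,'' but a hitting probability is not an occupation time. The usual conversion divides the expected occupation time of $U$ by $\inf_{w\in\partial U}\E^w[\text{occupation of }U]$, and this infimum is $0$ here because $\partial U$ reaches down to $\R$; from a first-hitting point of $U$ at height $\epsilon$ the remaining occupation of $U$ is negligible. The paper instead covers $A$ by countably many rectangles $\rect(z_j,2y_j)$, applies subadditivity of $\hcap$, computes $\hcap[\rect(i,2)]<7/2$ explicitly (via the half-disk and the same concavity inequality), and uses disjointness of the inscribed balls to get $\pi\sum y_j^2\le\hs(A)$; that route also delivers the exact constant $7/(2\pi)$, which your Green's-function bookkeeping would be unlikely to reproduce.
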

\section{Proof of Theorem~\ref{main}}
It suffices to prove this for
weakly bounded $\Half$-hulls, by which we mean $\Half$-hulls
$A$ with $\Im(A) < \infty$ and such that for each $\epsilon > 0$,
the set $\{x+iy: y > \epsilon\}$ is bounded.  Indeed, for $\Half$-hulls
that are not weakly bounded, it is easy to verify that
$\hs(A) = \hcap(A) = \infty$.

We start with a simple inequality that is implied but not explicitly stated
in \cite{lawler}.
Equality is achieved when $A$ is a vertical line segment.
\begin{lemma}  If $A$ is an $\Half$-hull, then
\begin{equation}  \label{jul13.2}
     \hcap(A) \geq \frac {\Im[A]^2}{2} .
\end{equation}
\end{lemma}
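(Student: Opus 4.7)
The plan is to apply the Loewner differential equation directly to $A$ and to estimate the imaginary part of an ``accessible topmost'' boundary point. Set $h := \Im[A]$; I aim to show $\hcap(A) \geq (h-\eps)^2/2$ for every $\eps>0$ and let $\eps\to 0$. To produce the relevant boundary point, choose $z_0 \in A$ with $\Im z_0 > h-\eps$ and set $s^* := \sup\{t \geq 0 : z_0 + it \in A\}$. Since $A$ is relatively closed in $\Half$ and $A \subseteq \{\Im \leq h\}$, the point $z^* := z_0 + is^*$ lies in $A$ with $\Im z^* \in (h-\eps,h]$, while $z^* + i\delta \in \Half \setminus A$ for every $\delta > 0$. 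Thus $z^*$ is accessible from $\Half \setminus A$ via a vertical approach from above.

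Next, parameterize the growth of $A$ by half-plane capacity: by the general Loewner theory for $\Half$-hulls, there is a family $(A_s)_{s \in [0,T]}$ with $A_0=\emptyset$, $A_T=A$, $\hcap(A_s)=2s$, and a measurable driving function $U_s$ such that $g_s := g_{A_s}$ satisfies $\partial_s g_s(z) = 2/(g_s(z) - U_s)$. By accessibility, $g_s(z^*)$ is defined up to some absorption time $\tau^* \in (0,T]$ with $\Im g_s(z^*) \to 0$ as $s \to \tau^{*-}$. Setting $Y_s := \Im g_s(z^*)$ and $X_s := \Re g_s(z^*) - U_s$, the Loewner ODE gives $\partial_s Y_s = -2 Y_s / (X_s^2 + Y_s^2)$, hence
\[
\frac{d}{ds} Y_s^2 \;=\; -\frac{4 Y_s^2}{X_s^2 + Y_s^2} \;\geq\; -4.
\]
Integrating from $0$ to $\tau^*$, using $Y_{\tau^*} = 0$ and $Y_0 = \Im z^*$, yields $(\Im z^*)^2 \leq 4\tau^* \leq 4T = 2\hcap(A)$, and therefore $\hcap(A) \geq (\Im z^*)^2/2 > (h-\eps)^2/2$.

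The main technical obstacle is justifying the Loewner setup for possibly non-slit hulls, in particular the well-definedness of the absorption time $\tau^*$ and the vanishing $\Im g_s(z^*) \to 0$ at absorption; this is covered by the general Loewner theory (Lawler, Chapter~4). A concrete alternative that avoids the general framework is to approximate $A$ from the inside by a sub-hull that is a smooth simple arc joining $\R$ to a point within distance $\eps$ of $z^*$ in $A$ (such an arc exists because every component of $A$ must accumulate on $\R$, else $\Half\setminus A$ would not be simply connected), apply the classical slit-Loewner ODE to that arc, and conclude via the monotonicity $\hcap(\gamma) \leq \hcap(A)$ for sub-hulls $\gamma \subseteq A$.
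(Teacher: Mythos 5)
Your core computation is exactly the paper's: the Loewner ODE gives $\partial_s Y_s^2 = -4Y_s^2/|g_s(z)-U_s|^2 \geq -4$, and integrating over a capacity parameterization yields the bound. The difference is in how the inequality is deployed, and the paper's way neatly sidesteps the two technical points you flag. Rather than tracking a topmost boundary point $z^*$ to its absorption time (which forces you to justify both the existence of a driving function for a general hull and the limit $\Im g_s(z^*)\to 0$ at absorption), the paper first reduces to the case where $A$ is a simple curve $\eta(0,T]$ (via continuity of $\hcap$ in the Hausdorff metric plus monotonicity) and then argues \emph{contrapositively}: for any $z$ with $\Im(z)^2 > 2\hcap(A)$, the inequality $Y_T^2 \geq Y_0^2 - 4T > 0$ shows the flow survives to time $T$, hence $z\notin A$; so every point of $A$ satisfies $\Im(z)^2\le 2\hcap(A)$. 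No absorption analysis is needed. Two caveats on your primary route: for a general $\Half$-hull there is no single measurable driving function $U_s$ generating the chain --- the correct general statement is the Loewner--Kufarev equation $\partial_s g_s(z)=\int 2\,\mu_s(du)/(g_s(z)-u)$ with probability measures $\mu_s$, though your estimate survives verbatim since $\int 2Y_s\,\mu_s(du)/|g_s(z)-u|^2 \le 2/Y_s$; and your fallback (a slit sub-hull reaching near $z^*$ plus monotonicity) is in substance the paper's own reduction, so it is a sound repair. Your direct route, once patched with the Kufarev form, is arguably more self-contained in that it avoids invoking continuity of $\hcap$ in the Hausdorff metric; the paper's version is shorter because the contrapositive eliminates the absorption-time limit.
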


\begin{proof}  Due to the continuity of $\hcap$ with respect to the Hausdorff metric on $\Half$-hulls, it suffices to prove the result for $\Half$-hulls that are path-connected. Further, by the monotonicity of $\hcap$ under containment, $A$ can be assumed to be of the
form $\eta(0,T]$ where $\eta$ is a
simple curve with $\eta(0+)  \in \R$, parameterized so that
$\hcap[\eta(0,t]) = 2t$. In particular, $T = \hcap(A)/2$.
If $g_t = g_{\eta(0,t]}$, then $g_t$
satisfies the Loewner equation
\begin{equation}  \label{jul13.1}
         \partial_t g_t(z) = \frac{2}{g_t(z) - U_t} , \;\;\;\;
   g_0(z) = z,
\end{equation}
where  $U:[0,T] \rightarrow \R$ is continuous.  Suppose $
\Im(z)^2 > 2 \, \hcap (A)$ and let $Y_t = \Im[g_t(z)]$.  Then
\eqref{jul13.1} gives
\[   - \partial_t Y_t^2 \leq \frac{4 Y_t}{|g_t(z) - U_t|^2} \leq 4, \]
which implies
\[    Y_T^2 \geq Y_0^2 - 4T > 0. \]
This implies that $z \not\in A$, and hence $\Im[A]^2 \leq
 2 \, \hcap(A).$
\end{proof}

The next lemma is a variant of the vital covering lemma.  If $c > 0$
and
 $z = x+iy \in \Half$,  let
\[  \ival(z,c) = (x - cy, x+cy) , \]
\[              \rect(z,c) = \ival(z,c) \times (0,y]
   = \{x' + iy': |x'-x| < c y , 0 < y' \leq y\}. \]

\begin{lemma}  \label{todaylem.1}
Suppose $A$ is a weakly
bounded $\Half$-hull and $c > 0 $.  Then
there exists a finite or countably infinite
sequence of points $\{z_1 = x_i + iy_1,z_2 = x_2 + iy_2,,\ldots\} \subset
A$ such that:
\begin{itemize}
\item  $y_1 \geq y_2 \geq y_3 \geq \cdots $;
\item the intervals $\ival(x_1,c),\ival(x_2,c),\ldots$
are disjoint;
\item
\begin{equation}  \label{today.1}
           A \subset \bigcup_{j=1}^\infty  \rect(z_j,
    2 c).
\end{equation}
\end{itemize}
\end{lemma}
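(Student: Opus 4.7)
The natural approach is a greedy Vitali-type selection. Set $A_0 = A$, and inductively, while $A_{k-1}$ is non-empty, pick $z_k = x_k + iy_k \in A_{k-1}$ achieving $y_k = \sup_{z \in A_{k-1}} \Im(z)$, then set $A_k = A_{k-1} \setminus \rect(z_k, 2c)$. The monotonicity $y_1 \geq y_2 \geq \cdots$ is immediate since $A_k \subset A_{k-1}$, and the covering conclusion \eqref{today.1} is automatic if the process terminates in finitely many steps. The substantive content is then (i) verifying that the supremum is attained so the construction is well-defined, (ii) checking the interval-disjointness, and (iii) ruling out that an infinite greedy sequence leaves any point of $A$ uncovered.

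For (ii), fix $j < k$. By construction $z_k \notin \rect(z_j, 2c)$, and $y_k \leq y_j$; since $\Im(z_k) > 0$, these together force $|x_k - x_j| \geq 2c y_j$. Because $y_j \geq y_k$, we have $2c y_j \geq c(y_j + y_k)$, which is precisely the condition that the intervals $\ival(x_j, c)$ and $\ival(x_k, c)$ be disjoint.

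For (i) and (iii), weak boundedness is the main tool: since $A$ is relatively closed in $\Half$ and $A \cap \{\Im \geq \epsilon\}$ is bounded for each $\epsilon > 0$, every such slice is compact. A maximizing sequence for $\sup_{A_{k-1}} \Im > 0$ has a convergent subsequence with limit $z^* \in A$, and one shows $z^*$ lies outside each previously removed $\rect(z_j, 2c)$ because that rectangle is open on the lateral boundary $|x' - x_j| = 2c y_j$, while the inductive maximality of $y_j$ rules out approach from above. If the process produces an infinite sequence then $y_k \to 0$: otherwise some $\delta > 0$ bounds the $y_k$ from below, and by (ii) there would be infinitely many pairwise disjoint intervals of length at least $2c\delta$ packed inside a bounded region of $\R$, which is impossible. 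Any uncovered $z \in A$ would then belong to every $A_{k-1}$ and thus satisfy $y_k \geq \Im(z) > 0$ for all $k$, contradicting $y_k \to 0$. The main subtlety is step (i), where the half-open shape of $\rect(\cdot, 2c)$ (closed on top, open on the sides) must be combined with the inductive height bounds to ensure that limits of points outside a rectangle remain outside it.
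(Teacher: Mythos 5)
Your proposal is correct and follows essentially the same greedy selection as the paper: pick the highest remaining point, remove $\rect(z_j,2c)$, and use $y_k\le y_j$ together with $z_k\notin\rect(z_j,2c)$ to get $|x_k-x_j|\ge 2cy_j\ge c(y_j+y_k)$, then deduce $y_j\to 0$ from weak boundedness. The extra care you take with attainment of the supremum and with limits along the half-open boundary of $\rect(z_j,2c)$ fills in details the paper leaves implicit.
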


\begin{proof}  We define the points recursively.  Let $A_0 = A$ and
given $\{z_1,\ldots,z_j\}$, let
\[    A_j = A \setminus \left[\bigcup_{k=1}^{j} \rect(z_j,2c)\right]
.\]
If $A_j = \emptyset$ we stop, and if
$A_j \neq \emptyset$,we  choose $z_{j+1} = x_{j+1}
 + iy_{j+1}  \in A$ with  $y_{j+1} = \Im[A_j]$. Note that if $k \leq j$,
then $|x_{j+1} - x_k| \geq 2 \, c \, y_k \geq c \,(y_k+
 y_{j+1})$
and hence $\ival(z_{j+1},c) \cap \ival(z_k,c) = \emptyset.$
Using the weak boundedness of $A$, we can see that $y_{j} \rightarrow
0$ and hence \eqref{today.1} holds.
\end{proof}

\begin{lemma} \label{todaylem.2}
 For every $c > 0$, let $$\rho_c := \frac{ 2 \sqrt{2}}{\pi}
\,  \arctan\left( e^{-\theta} \right), \;\;\;\;
  \theta = \theta_c = \frac{ \pi}{4c} .$$
Then, for any $c > 0$, if $A$ is a
weakly bounded $\Half$-hull and $x_0 +iy_0 \in A$ with
$y_0 = \Im(A)$,  then
\[  \hcap(A) \geq  \rho_c^2 \, y_0^2 +
    \hcap \left[A \setminus \rect(z,2c) \right]. \]
\end{lemma}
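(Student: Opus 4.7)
The plan is to combine the previous lemma's bound $\hcap \geq \Im[\,\cdot\,]^2/2$ with the additivity of half-plane capacity under conformal composition. Let $A_1 := A \setminus \rect(z, 2c)$ and $\tilde A := g_{A_1}(A \setminus A_1)$, which is an $\Half$-hull. Composing $g_A = g_{\tilde A} \circ g_{A_1}$ and reading off the $1/z$-coefficients at infinity gives the additivity
\[
\hcap(A) \;=\; \hcap(A_1) + \hcap(\tilde A),
\]
so it suffices to prove $\hcap(\tilde A) \geq \rho_c^2 y_0^2$. Since $z \in A \cap \rect(z, 2c) = A \setminus A_1$, the point $g_{A_1}(z)$ belongs to $\tilde A$, so $\Im[\tilde A] \geq \Im g_{A_1}(z)$ and the previous lemma yields $\hcap(\tilde A) \geq \tfrac12 \bigl(\Im g_{A_1}(z)\bigr)^2$. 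Thus it is enough to establish
\[
\Im g_{A_1}(z) \;\geq\; \sqrt{2}\,\rho_c\,y_0 \;=\; \tfrac{4}{\pi}\arctan(e^{-\theta})\,y_0.
\]

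Next, I will bound $\Im g_{A_1}(z)$ from below by a Brownian-motion exit computation. The function $w \mapsto \Im w - \Im g_{A_1}(w)$ is bounded and harmonic on $\Half \setminus A_1$, vanishes at infinity, equals $\Im w$ on $\partial A_1 \cap \Half$ and $0$ on $\R \setminus A_1$; by uniqueness it coincides with $\E^w[\Im B_{\tau_{A_1}}]$, so
\[
\Im g_{A_1}(z) \;=\; y_0 - \E^z\!\left[\Im B_{\tau_{A_1}}\right].
\]
The key geometric input is that $A_1$ is disjoint from the infinite half-strip $S := \{x' + iy' : |x' - x_0| < 2cy_0,\ y' > 0\}$: every point of $A$ with $x$-coordinate in $(x_0 - 2cy_0, x_0 + 2cy_0)$ has imaginary part in $(0, y_0]$ by maximality of $y_0 = \Im[A]$, so it lies in $\rect(z, 2c)$ and is removed when forming $A_1$. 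Consequently $S \subset \Half \setminus A_1$ and $\tau_S \leq \tau_{A_1}$ for a Brownian motion started at $z$; if $B_{\tau_S}$ lies on the bottom of $S$ (in $\R$) then $\Im B_{\tau_{A_1}} = 0$, while if $B_{\tau_S}$ lies on a vertical side then $\Im B_{\tau_{A_1}} \leq \Im[A_1] \leq y_0$. Therefore
\[
\E^z\!\left[\Im B_{\tau_{A_1}}\right] \;\leq\; y_0 \cdot \Prob^z\!\left[B_{\tau_S}\ \text{on a vertical side of}\ S\right].
\]

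To compute the right-hand probability I will use the explicit conformal map $\phi(w) = \sin\!\bigl(\pi(w - x_0)/(4cy_0)\bigr)$, which takes $S$ bijectively onto $\Half$, sends the bottom edge to $(-1,1)$, the two vertical sides to $(-\infty,-1]$ and $[1,\infty)$, and $z$ to $i\sinh\theta$ with $\theta = \pi/(4c)$. Conformal invariance of Brownian motion together with the Cauchy harmonic measure on $\Half$ gives
\[
\Prob^z\!\left[B_{\tau_S}\ \text{on a vertical side of}\ S\right] \;=\; 1 - \tfrac{2}{\pi}\arctan(1/\sinh\theta).
\]
The double-angle identity $\tan(2\arctan u) = 2u/(1 - u^2)$ at $u = e^{-\theta}$ yields $\arctan(1/\sinh\theta) = 2\arctan(e^{-\theta})$, and therefore
\[
\Im g_{A_1}(z) \;\geq\; y_0 \cdot \tfrac{2}{\pi}\arctan(1/\sinh\theta) \;=\; \tfrac{4}{\pi}\arctan(e^{-\theta})\, y_0 \;=\; \sqrt{2}\,\rho_c\,y_0,
\]
as required. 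The only delicate step is the geometric inclusion $S \subset \Half \setminus A_1$, where the maximality $y_0 = \Im[A]$ is crucial; the remaining estimates are standard conformal-mapping and Poisson-kernel computations. Incidentally, this argument shows that the constant $\rho_c$ in the lemma is sharp up to the additional factor of $\sqrt{2}$ absorbed in the vertical-segment bound $\hcap \geq \Im[\,\cdot\,]^2/2$.
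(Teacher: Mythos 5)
Your proof has a genuine gap at its very first step: you treat $A_1 := A \setminus \rect(z,2c)$ as an $\Half$-hull, so that the normalized conformal map $g_{A_1}$ of $\Half\setminus A_1$ onto $\Half$ exists, and you then rely on it three times (the additivity $\hcap(A)=\hcap(A_1)+\hcap(\tilde A)$, the fact that $\tilde A = g_{A_1}(A\setminus A_1)$ is a hull to which the lemma $\hcap \geq \Im[\cdot]^2/2$ applies, and the identity $\Im g_{A_1}(w) = \Im w - \E^w[\Im B_{\tau_{A_1}}]$). But $\Half\setminus A_1$ need not be simply connected even when $\Half\setminus A$ is. For example, take $A = \bigl([-10,10]+i\bigr)\cup(0,i]$, which is a weakly bounded hull with $\Im[A]=1$ and $z=i$; for small $c$ the rectangle $\rect(i,2c)$ swallows the vertical segment $(0,i]$ together with the middle of the horizontal bar, leaving $A_1$ equal to two horizontal segments suspended at height $1$ with no connection to $\R$. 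The complement of such a segment in $\Half$ is conformally an annulus, so no conformal map onto $\Half$ exists and the whole chain of identities breaks down. (Note also that the statement being proved deliberately uses $\hcap[A\setminus\rect(z,2c)]$ in the Brownian-motion sense precisely because this set is in general not a hull.)

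The rest of your argument --- the observation that $A_1$ misses the open half-strip over $\ival(z,2c)$, the exit-probability bound through the bottom of that strip, the map $w\mapsto \sin(\pi(w-x_0)/(4cy_0))$, and the double-angle identity giving $\tfrac{4}{\pi}\arctan(e^{-\theta})$ --- is correct and is exactly the computation in the paper. The missing idea is how to legitimize the decomposition: the paper replaces $A_1$ by its vertical shadow $S$ (all points $x+iuy$ with $x+iy\in A\setminus\rect(z,2c)$ and $0<u\leq 1$), which \emph{is} a hull because every point of its complement is joined to $\infty$ by a vertical ray, and which satisfies $S\cap A = A\setminus\rect(z,2c)$ and $S\cap\rect(z,2c)=\emptyset$. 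The submodularity inequality $\hcap(A_1\cup A_2)-\hcap(A_2)\leq \hcap(A_1)-\hcap(A_1\cap A_2)$ then converts the desired bound into a lower bound on $\hcap(S\cup A)-\hcap(S)$, at which point your half-strip computation (applied to the map $f=g_S$, which does exist) goes through verbatim. Without the shadow construction, or some substitute for it, your proof does not establish the lemma.
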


\begin{proof}
By scaling and invariance under real translation,
we may assume that $\Im[A] = y_0 = 1$ and $x_0 = 0$.
Let $S = S_c $ be defined to be the set of all points $z$
of the form $x + iuy$ where $x+iy \in A \setminus \rect(i, 2c)$ and $ 0 < u \leq 1$.

Clearly, $S \cap A = A \setminus \rect(i, 2c)$.

Using the capacity inequality
\cite[(3.10)]{lawler}
\begin{equation}  \label{capin}
 \hcap(A_1 \cup A_2) -  \hcap(A_2) \leq \hcap(A_1)
   - \hcap (A_1 \cap A_2),
\end{equation}
   we see that
   \[\hcap(S \cup A) - \hcap(S) \leq \hcap(A) - \hcap(S \cap A).\]
Hence, it suffices to show that
\[   \hcap(S \cup A) - \hcap( S) \geq \rho_c^2 . \]
Let  $f$  be the conformal map of $\HH \setminus S$ onto $\HH$
such that $ z - f(z) = o(1)$ as $z \rightarrow \infty$.
Let $S^* := S \cup A$.
By  properties of halfplane capacity \cite[(3.8)]{lawler} and
\eqref{jul13.2},
\[ \hcap(S^*) - \hcap( S) = \hcap[f(S^* \setminus S)] \geq \frac{\Im[f(i)]^2}{2}. \]
Hence, it suffices to prove that
\begin{equation}  \label{jul13.3}
 \Im[f(i)] \geq \sqrt 2 \, \rho   = \frac 4 \pi \,
 \arctan\left( e^{-\theta}\right).
\end{equation}

 By construction, $S \cap \rect(z, 2c) = \emptyset$.
Let $V = (-2c,2c) \times (0,\infty) = \{x+iy: |x| < 2c, y > 0\}$ and
let $\tau_V$ be the first time that a Brownian motion leaves the
domain.
Then \cite[(3.5)]{lawler}, \[
 \Im[f(i)]   = 1-   \E^{i}\left[\Im(B_{\tau_{S}})\right]
\geq  \p\left\{ B_{\tau_{S}} \in [-2c,2c]  \right\} \\
  \geq   \p\left\{ B_{\tau_{V}} \in [-2c,2c]  \right\}
.\]
The map $\Phi(z) = \sin \left(\theta z\right)$ maps $V$ onto
$\Half$ sending $[-2c,2c]$ to $[-1,1]$ and
$  \Phi(i) = i \, \sinh \theta.$
Using conformal invariance of Brownian motion and the Poisson kernel
in $\Half$, we see that
\[  \p\left\{ B_{\tau_{V}} \in [-2c,2c]  \right\} =
   \frac{2}{\pi} \arctan \left( \frac{1}{\sinh \theta} \right) = \frac
 4 \pi \, \arctan \left(e^{-\theta}\right). \]
The second equality uses the double angle formula for the tangent.
\end{proof}

\begin{lemma}\label{l:2last}  Suppose $c > 0$ and $x_1+iy_1,
x_2 + iy_2,\ldots$
are as in Lemma \ref{todaylem.1}.  Then
\begin{equation}  \label{bast.1}
     \hs(A) \leq [\pi + 8c]  \sum_{j=1}^\infty y_j^2.
\end{equation}
If $c \geq 1$, then
\begin{equation} \pi  \sum_{j=1}^\infty y_j^2
      \leq \hs(A). \end{equation}
\end{lemma}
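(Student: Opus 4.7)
The proof splits cleanly into two independent parts: the upper bound in \eqref{bast.1} uses only the covering property $A\subseteq\bigcup_j\rect(z_j,2c)$ from Lemma~\ref{todaylem.1}, while the lower bound uses only the pairwise disjointness of the intervals $\ival(z_j,c)$ together with $c\geq 1$.

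For the upper bound, I would start from the containment
\[
\bigcup_{z\in A}\ball(z,\Im z)\;\subseteq\;\bigcup_j U_j,\qquad U_j\;:=\;\bigcup_{z\in\rect(z_j,2c)}\ball(z,\Im z),
\]
and use countable subadditivity to reduce to a bound on each $\area(U_j)$. A slice-by-slice parametrization shows that a point $u+iv$ lies in $U_j$ iff $0<v\leq 2y_j$ and
\[
|u-x_j|\;\leq\;2cy_j+\sqrt{v(2y_j-v)},
\]
since the condition $(u-x)^2+(v-y)^2\leq y^2$ is solvable for some $y\in(0,y_j]$ and $x\in(x_j-2cy_j,x_j+2cy_j)$ exactly in this regime. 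Integrating the horizontal width over $v$ decomposes the area of $U_j$ into a rectangular contribution $8cy_j^2$ and a semicircular contribution $2\int_0^{2y_j}\sqrt{v(2y_j-v)}\,dv=\pi y_j^2$ (the substitution $v=y_j+u$ reduces this integral to the area of a half-disk of radius $y_j$), yielding $\area(U_j)=(\pi+8c)y_j^2$. Summing over $j$ delivers \eqref{bast.1}.

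For the lower bound, assume $c\geq 1$. Since $z_j\in A$, each disk $\ball(z_j,y_j)$ belongs to the family whose union defines $\hs(A)$. Its horizontal projection onto $\R$ is $[x_j-y_j,x_j+y_j]$, which is contained in the closure of $\ival(z_j,c)=(x_j-cy_j,x_j+cy_j)$ because $c\geq 1$. The disjointness of the $\ival(z_j,c)$ furnished by Lemma~\ref{todaylem.1} then forces the projections---and hence the disks themselves---to have pairwise disjoint interiors, so
\[
\hs(A)\;\geq\;\sum_j\area\bigl(\ball(z_j,y_j)\bigr)\;=\;\pi\sum_j y_j^2.
\]

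Neither half presents a genuine analytic obstacle; the only care needed is in the explicit description of $U_j$, which is exactly what pins down the constant $\pi+8c$. The lower bound reduces to packing pairwise disjoint tangent disks, and the assumption $c\geq 1$ is used precisely to ensure the disks are no wider than the horizontal slots guaranteed by Lemma~\ref{todaylem.1}.
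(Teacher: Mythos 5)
Your proof is correct and follows the same route as the paper: the upper bound from the covering $A\subseteq\bigcup_j\rect(z_j,2c)$ plus the area computation $\area(U_j)=(\pi+8c)y_j^2$ (which the paper dismisses as a ``simple geometry exercise'' and you carry out explicitly), and the lower bound from the disjointness of the disks $\ball(z_j,y_j)$ forced by $c\geq 1$ and the disjoint intervals $\ival(z_j,c)$. No issues.
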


\begin{proof}  A simple geometry exercise shows that
\[ \area \left[ \bigcup_{x+iy \in \rect(z_j,2c)}
     \ball(x+iy,y) \right] =  [\pi + 8c] \, y_j^2. \]
Since
\[   A \subset \bigcup_{j=1}^\infty \rect(z_j,2c), \]
the upper bound in \eqref{bast.1} follows.  Since
$c \geq 1$, and the intervals $\ival(z_j,c)$ are disjoint,  so are the
disks $\ball(z_j,y_j)$. Hence,
\[      \area\left[\bigcup_{x+iy \in A} \ball(x+iy,y)\right]
\geq \area\left[
                 \bigcup_{j=1}^\infty  \ball(z_j,y_j)\right]
  = \pi \sum_{j=1}^\infty y_j^2 .\]
\end{proof}

\begin{proof}[Proof of Theorem \ref{main}]
 Let $V_j = A \cap \rect(z_j,c)$.
Lemma \ref{todaylem.2} tells us that
\[  \hcap\left[\bigcup_{k=j}^\infty V_j\right]
  \geq \rho_c^2 \, y_j^2 +
    \hcap\left[\bigcup_{k=j+1}^\infty  V_j\right], \]
and hence
\[  \hcap(A) \geq  \rho_c^2 \, \sum_{j=1}^\infty y_j^2. \]
Combining this with the upper bound in \eqref{bast.1} with any $c > 0$ gives
\[   \frac{\hcap(A) } {\hs(A)}
\geq \frac{\rho^2_c}{\pi + 8c}.\]
Choosing $c = \frac{8}{5}$ gives us
\[   \frac{\hcap(A) } {\hs(A)}
> \frac{1}{66}.\]

For the upper bound, choose a covering as in Lemma \ref{todaylem.1}
with $c = 1$.
Subadditivity and scaling give
\[ \hcap(A) \leq \sum_{j=1}^\infty \hcap\left[\rect(z_j, 2y_j)
\right]
   = \hcap[\rect(i,2)] \, \sum_{j=1}^\infty
   y_j^2. \] Combining this with the lower bound in
\eqref{bast.1} gives
\[   \frac{\hcap(A)}{\hs(A)} \leq \frac{\hcap[\rect(i,2)]}
   \pi. \]
Note that  $\rect(i,2)$ is the union of two real
translates of $\rect(i,1)$, $\hcap[\rect(i,2)] \leq
  2 \, \hcap[\rect(i,1)]$ whose intersection is the
interval $(0,i]$.  Using \eqref{capin},  we see that
\[  \hcap(\rect(i,2)) \leq 2 \, \hcap( \rect(i,1))
         - \hcap ((0,i]) = 2 \, \hcap( \rect(i,1))
 - \frac 12. \]
But $\rect(i,1)$ is
strictly contained
in $ A' := \{z \in \Half: |z| \leq \sqrt 2\}$, and hence
\[      \hcap[\rect(i,1)] < \hcap(A') = 2
 . \]
The last equality can be seen by considering $h(z) =   {z} + 2z^{-1} $
which maps $\Half \setminus A'$ onto $\Half$.  Therefore,
\[   \hcap[\rect(i,2)] < \frac 72, \] and hence
\[   \frac{\hcap(A)}{\hs(A)} \leq \frac{7}
   {2\pi}. \]
\end{proof}

%
\end{document}